\DeclareFontFamily{OT2}{cmr}{\hyphenchar\font45 }
\DeclareFontShape{OT2}{cmr}{m}{n}{%
   <5><6><7><8><9>gen*wncyr%
   <10><10.95><12><14.4><17.28><20.74><24.88>wncyr10}{}
\DeclareMathAlphabet{\mathcyr}{OT2}{cmr}{m}{n}
\DeclareMathAlphabet{\mathcyb}{OT2}{cmr}{b}{n}
\SetMathAlphabet{\mathcyr}{bold}{OT2}{cmr}{b}{n}
\newtheorem{thm}{Theorem}[section]
\newtheorem{lem}[thm]{Lemma}
\theoremstyle{definition}
\newtheorem{defn}[thm]{Definition}
\newtheorem{ex}[thm]{Example}
\theoremstyle{remark}
\newtheorem{rem}[thm]{Remark}
\newtheorem{que}[thm]{Question}
\newcommand{\shaub}{\mathbin{\underline{\sh}}}
\newcommand{\sht}{\mathbin{\widetilde{\sh}}}
\newcommand{\sh}{\mathbin{\mathcyr{sh}}}
\begin{document}

\title[On a generalization of the cyclic sum formula for FMZ(S)Vs]{On a generalization of the cyclic sum formula for finite multiple zeta and zeta-star values}

\author{Hideki Murahara}
\address[Hideki Murahara]{Nakamura Gakuen University Graduate School, 5-7-1, Befu, Jonan-ku,
Fukuoka, 814-0198, Japan}
\email{hmurahara@nakamura-u.ac.jp}

\subjclass[2010]{Primary 11M32}
\keywords{Multiple zeta values, Finite multiple zeta values, Cyclic sum formula, Cyclic relation}

\begin{abstract}
 The cyclic sum formulas for multiple zeta and zeta-star values were respectively proved by Hoffman and Ohno, and Ohno and Wakabayashi. 
 Kawasaki and Oyama obtained an analogous formulas for finite multiple zeta and zeta-star values. 
 In this paper, we give a generalization of Kawasaki and Oyama's results. 
\end{abstract}

\maketitle

\section{Introduction}
\subsection{Multiple zeta(-star) values, the cyclic sum formulas, and their generalizations}
For positive integers $k_{1},\dots,k_{r}$ with $k_{r}\ge2$, 
the multiple zeta values (MZVs) and the multiple zeta-star values
(MZSVs) are defined by 
\begin{align*}
 \zeta(k_{1},\dots,k_{r})
 &:=\sum_{0<n_{1}<\cdots<n_{r}}\frac{1}{n_{1}^{k_{1}}\cdots n_{r}^{k_{r}}} \in\mathbb{R},\\
 \zeta^{\star}(k_{1},\dots,k_{r})
 &:=\sum_{0<n_{1}\le\cdots\le n_{r}}\frac{1}{n_{1}^{k_{1}}\cdots n_{r}^{k_{r}}} \in\mathbb{R}.
\end{align*}
We say that an index $(k_{1},\dots,k_{r})\in\mathbb{Z}_{\ge1}^{r}$ is admissible if $k_{r}\ge2$.
It is known that there are a lot of linear relations over $\mathbb{Q}$ among MZVs. 
The following relations obtained by Hoffman and Ohno \cite[eq.(1)]{HO03}, and Ohno and Wakabayashi \cite{OW06} are the clean-cut decompositions for the well-known sum formulas. 
\begin{thm}[Cyclic sum formula for MZ(S)Vs; Hoffman--Ohno, Ohno--Wakabayashi] \label{cycsum}
 For a non-empty index $(k_1,\dots,k_r)$ with $(k_1,\dots,k_r)\ne (\underbrace{1,\dots,1}_{r})$, we have
 \begin{align*}
  \sum_{l=1}^r \sum_{m=1}^{k_l-1} \zeta (m, k_{l+1}, \dots, k_r, k_1, \dots, k_{l-1}, k_l-m+1) 
  &=\sum_{l=1}^r \zeta (k_{l+1}, \ldots, k_r, k_1, \dots, k_{l-1}, k_l+1), \\
  \sum_{l=1}^r \sum_{m=1}^{k_l-1} \zeta^\star (m, k_{l+1}, \dots, k_r, k_1, \dots, k_{l-1}, k_l-m+1) 
  &=k \zeta^\star (k+1), 
 \end{align*}
 where we set $k:=k_1+\cdots+k_r$. 
\end{thm}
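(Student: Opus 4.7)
I would prove both identities by direct manipulation of the defining multiple series, following the strategies of Hoffman--Ohno and Ohno--Wakabayashi. For the MZV identity, first expand each summand on the left as
\[
\zeta(m,k_{l+1},\dots,k_{l-1},k_l-m+1)
=\sum_{0<j_1<\cdots<j_{r+1}}\frac{1}{j_1^{m}\,j_2^{k_{l+1}}\cdots j_r^{k_{l-1}}\,j_{r+1}^{k_l-m+1}},
\]
exchange the order of summation so that the inner sum runs over $m$, and apply the elementary identity
\[
\sum_{m=1}^{k-1}\frac{1}{a^{m}\,b^{k-m+1}}=\frac{1}{b-a}\left(\frac{1}{a^{k-1}b}-\frac{1}{b^{k}}\right)\qquad(0<a<b),
\]
with $a=j_1$, $b=j_{r+1}$, and $k=k_l$. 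This collapses the inner $m$-sum and introduces an auxiliary factor $1/(j_{r+1}-j_1)$. Summing over the rotation index $l$ and applying one further partial-fraction decomposition on that denominator produces a telescoping cascade between adjacent rotations; what survives is precisely the right-hand side $\sum_l\zeta(k_{l+1},\dots,k_{l-1},k_l+1)$.

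For the MZSV identity, I would expand each $\zeta^{\star}$ on the left via the standard formula that writes it as the sum of $\zeta(\cdots)$ over all ways of replacing some of its commas by plus signs. Re-grouping by the positions of the plus signs converts the cyclic MZSV sum into a linear combination of MZV cyclic sums (of various shorter depths), to each of which the first identity applies. The non-diagonal contributions cancel in the re-grouping, leaving only the depth-one terms, which assemble into the clean $k\,\zeta^{\star}(k+1)$.

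The main obstacle is the cyclic telescoping in the MZV case: one must pair the ``inner'' term at rotation $l$ with the ``boundary'' term at rotation $l+1$ so that the auxiliary denominators $1/(j_{r+1}-j_1)$ cancel, and handle separately those rotations for which $k_l=1$ (where the $m$-sum is empty and the geometric identity must be interpreted as $0$). The hypothesis $(k_1,\dots,k_r)\ne(1,\dots,1)$ enters exactly here: it guarantees that all the auxiliary iterated sums converge and that no boundary term is lost in the telescoping; dropping it would introduce a divergent piece on the right.
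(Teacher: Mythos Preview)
The paper does not supply its own proof of this theorem: Theorem~\ref{cycsum} is quoted from Hoffman--Ohno \cite{HO03} and Ohno--Wakabayashi \cite{OW06} as background, and the only further comment is the remark that it follows from the cyclic relation (Theorem~\ref{cycrel}) by specializing $r_1=\cdots=r_d=1$. So there is nothing in the paper to compare your argument against line by line.

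That said, your plan for the MZV identity is essentially the classical one: summing the geometric series in $m$ to produce the factor $1/(j_{r+1}-j_1)$ and then telescoping across the cyclic rotations is exactly the mechanism behind the original proofs, and your identification of the $k_l=1$ rotations and the hypothesis $(k_1,\dots,k_r)\ne(1,\dots,1)$ as the delicate point is accurate. Your MZSV plan, however, is looser than you suggest. Expanding each $\zeta^{\star}(m,k_{l+1},\dots,k_{l-1},k_l-m+1)$ over coarsenings does \emph{not} immediately re-assemble into a linear combination of MZV cyclic sums of shorter indices to which the first identity applies; the coarsenings interact with the cyclic shift and with the split $(m,k_l-m+1)$ in a way that does not obviously cancel ``non-diagonal'' terms. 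The known proofs of the equivalence (see the references \cite{IKOO11,TW10} cited in the paper's remark) go through Hoffman's algebraic setup rather than a naive expansion, and Ohno--Wakabayashi's original argument for the $\zeta^{\star}$ case is a direct partial-fraction/telescoping computation on the $\zeta^{\star}$ series, parallel to (not deduced from) the $\zeta$ case. If you want a self-contained proof, I would recommend redoing the series manipulation for $\zeta^{\star}$ directly rather than trying to reduce to the $\zeta$ identity.
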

\begin{rem}
 The cyclic sum formulas for MZVs and MZSVs, i.e., the first and the second statements of Theorem \ref{cycsum}, are equivalent (see \cite[Section 4]{IKOO11} and \cite[Proposition 3.3]{TW10}). 
\end{rem}

In \cite[Theorem 2]{HMM19}, Hirose, Murakami, and the author obtained the generalization of Theorem \ref{cycsum} 
 by considering the cyclic analogue of MZVs. 
\begin{defn}[Cyclic analogue of MZVs]
Let $d$ and $r_1,\dots,r_d$ be positive integers. 
For positive integers $n_{i,1},\dots,n_{i,r_i}$ and $k_{i,1},\ldots,k_{i,r_i} \ (i=1,\dots,d)$, we write 
\begin{align*}
 \boldsymbol{k}_i
 &:=(k_{i,1},\dots,k_{i,r_i}), \\
 \boldsymbol{n}_i^{\boldsymbol{k}_i} 
 &:=n_{i,1}^{k_{i,1}} \cdots n_{i,r_i}^{k_{i,r_i}}, \\
 r
 &:=r_1+\cdots+r_d 
\end{align*}
and
\begin{align*}
 \boldsymbol{k}
 &:=[\boldsymbol{k}_{1},\dots,\boldsymbol{k}_{d}], \\
 \boldsymbol{n}^{\boldsymbol{k}} 
 &:=\boldsymbol{n}_{1}^{\boldsymbol{k}_{1}}\cdots\boldsymbol{n}_{d}^{\boldsymbol{k}_{d}}. 
\end{align*}
A multi-index $\boldsymbol{k}$ is called an admissible multi-index if 
\begin{itemize}
 \item for all $1\le i\le d$, the index $\boldsymbol{k}_i$ is admissible or equal to $(1)$, 
 \item there exists $1\le i\le d$ such that $\boldsymbol{k}_i \ne (1)$. 
\end{itemize}
Then, for an admissible multi-index $\boldsymbol{k}$, the cyclic analogue of MZVs are defined by
 \begin{align*}
  \zeta^{\mathrm{cyc}} (\boldsymbol{k}) 
  :=\sum_{S} \frac{ 1 }{ \boldsymbol{n}^{\boldsymbol{k}} },
 \end{align*}
 where
 \begin{align*}
  S&
  :=\{(n_{1,1},\dots,n_{d,r_{d}})\in\mathbb{Z}_{\ge1}^{r} 
  \mid 
  n_{1,1}<\cdots<n_{1,r_{1}},\dots, n_{d,1}<\cdots<n_{d,r_{d}}, \\
  &\qquad\qquad\qquad\qquad\qquad\qquad\,\,
   n_{1,1} \le n_{2,r_{2}}, \dots, n_{d-1,1} \le n_{d,r_{d}}, n_{d,1} \le n_{1,r_{1}} \}. 
 \end{align*}
\end{defn}
\begin{rem}
 When $d=1$, we have 
 $\zeta^{\mathrm{cyc}} ([(k_{1,1},\dots,k_{1,r_1})])=\zeta (k_{1,1},\dots,k_{1,r_1})$.
\end{rem}

We recall Hoffman's algebraic setup with a slightly different convention (see \cite{hoffman_97}).
Set $\mathfrak{H}:=\mathbb{Q}\langle x,y\rangle$.
We denote by $\mathfrak{H}^{\mathrm{cyc}}_{0}$ the subspace of $\oplus_{d=1}^{\infty}\mathfrak{H}^{\otimes d}$ 
spanned by 
\[
 \bigcup_{d=1}^{\infty}
 \{w_{1}\otimes\cdots\otimes w_{d}\in\mathfrak{H}^{\otimes d} 
 \mid w_{1},\dots,w_{d}\in y\mathfrak{H}x \cup\{y\}\  
 \text{and there exists }i\ \text{such that }w_{i}\ne y\}.
\]
For a positive integer $k$, put $z_k:=yx^{k-1}$. 
We define a $\mathbb{Q}$-linear map $Z^{\mathrm{cyc}}_{0}:\mathfrak{H}^\mathrm{cyc}\to\mathbb{R}$
by 
\[
 Z^\mathrm{cyc}(z_{k_{1,1}}\cdots z_{k_{1,r_{1}}}\otimes\cdots\otimes z_{k_{d,1}}\cdots z_{k_{d,r_{d}}})
 :=\zeta^\mathrm{cyc}([(k_{1,1},\dots,k_{1,r_{1}}),\dots,(k_{d,1},\dots,k_{d,r_{d}})]).
\]
We define the shuffle product as the $\mathbb{Q}$-bilinear product $\sh:\mathfrak{H}\times\mathfrak{H}\to\mathfrak{H}$ given by  
\begin{align*}
 1\sh w&=w\sh 1=w, \\
 wu\sh w'u'&=(w\sh w'u')u+(wu\sh w')u',
\end{align*}
where $w,w'\in\mathfrak{H}$ and $u,u'\in\{x,y\}$. 
\begin{thm}[Cyclic relation; Hirose--Murahara--Murakami] \label{cycrel}
 For $w_{1}\otimes\cdots\otimes w_{d}\in\mathfrak{H}^\mathrm{cyc}_{0}$, we have 
 \begin{align*}
  &\sum_{i=1}^{d} Z^{\mathrm{cyc}}
   (w_{1}\otimes\cdots\otimes w_{i-1}\otimes(y\shaub w_{i})\otimes w_{i+1}\otimes\cdots\otimes w_{d}) \\
  &=\sum_{i=1}^{d} Z^{\mathrm{cyc}}
   (w_{1}\otimes\cdots\otimes w_{i}\otimes y\otimes w_{i+1}\otimes\cdots\otimes w_{d}),
 \end{align*} 
 where $y\shaub u_{i}=y\sh u_{i}-yu_{i}-u_{i}y$. 
\end{thm}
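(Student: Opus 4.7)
My plan is to compare both sides of the identity as sums over cyclic-admissible integer tuples augmented by one extra positive integer $m$ (the summation variable associated to the inserted letter $y$), and then to verify the equality via a telescoping argument around the cycle of blocks.

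To unfold the LHS, I would write each $w_i = z_{k_{i,1}} \cdots z_{k_{i,r_i}}$ and expand $y \sh w_i$ as a sum, with multiplicity, of $z$-words obtained by inserting an additional letter $y$ at each of the possible positions within $w_i$. Translating to the series representation, each such word introduces a new variable $m$ with weight $1/m$ subject to inequalities determined by its position relative to $n_{i,1}<\cdots<n_{i,r_i}$. The subtractions $-yw_i$ and $-w_iy$ cancel exactly the outermost placements, where $m$ would fall strictly to the left of $n_{i,1}$ or strictly to the right of $n_{i,r_i}$, so that the LHS reduces to a sum over cyclic-admissible $\boldsymbol{n}$ of $\frac{1}{\boldsymbol{n}^{\boldsymbol{k}}}$ times a cyclic sum over ``interior'' placements of $m$ relative to each block.

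For the RHS, inserting $y$ as its own one-letter block between $w_i$ and $w_{i+1}$ (cyclically) makes $m$ a fresh block variable constrained by $n_{i,1}\le m\le n_{i+1,r_{i+1}}$, with weight $1/m$. The core of the argument is then to show that, for each fixed cyclic-admissible $\boldsymbol{n}$,
\[
\sum_{i=1}^d \sum_{m \,\text{interior to block } i} \frac{1}{m} \;=\; \sum_{i=1}^d \sum_{m=n_{i,1}}^{n_{i+1,r_{i+1}}} \frac{1}{m}.
\]
I would first verify this in the case $d=1$, which is the classical cyclic sum formula (Theorem \ref{cycsum}) and reduces to a partial-fraction telescoping of the form $\frac{1}{m^p(n-m)}=\sum_{l=1}^{p}\frac{1}{m^l n^{p-l+1}}+\frac{1}{n^p(n-m)}$, repeatedly applied across the block variables.

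The main obstacle I expect is the careful matching of strict versus non-strict inequalities arising from the shuffle product against the inclusive endpoints of the RHS intervals. For example, $y\sh z_k$ produces a doubled $z_1 z_k$ term because positions $0$ and $1$ in $yx^{k-1}$ both yield the word $yyx^{k-1}$, and this extra multiplicity is precisely what encodes the boundary value $m=n_{i,1}$ in the RHS interval $[n_{i,1},n_{i+1,r_{i+1}}]$. Once this bookkeeping is handled cleanly for $d=1$, the general case should follow by extending the telescoping around the full cyclic arrangement, using the cyclic closure $n_{d,1}\le n_{1,r_1}$ to close the loop and reduce, one block at a time, to the base case.
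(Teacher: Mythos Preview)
The paper does not prove Theorem \ref{cycrel}; it is quoted as \cite[Theorem 2]{HMM19} and used as background. So there is no proof in this paper to compare against.

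On your proposal itself: the overall strategy---unfold both sides as sums with one extra integer variable coming from the inserted $y$, then match them by a partial-fraction telescoping around the cycle---is indeed the mechanism behind the original proof in \cite{HMM19}, and your cited identity $\frac{1}{m^{p}(n-m)}=\sum_{l=1}^{p}\frac{1}{m^{l}n^{p-l+1}}+\frac{1}{n^{p}(n-m)}$ is the right tool. However, the displayed reduction
\[
\sum_{i=1}^d \sum_{m\ \text{interior to block } i} \frac{1}{m}
\;=\;
\sum_{i=1}^d \sum_{m=n_{i,1}}^{n_{i+1,r_{i+1}}} \frac{1}{m}
\]
is not what the LHS becomes, and this is the gap. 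Shuffling $y$ into $z_{k}=yx^{k-1}$ does not merely introduce a new variable $m$ with weight $1/m$ and the old variable unchanged; it produces the words $z_{l}z_{k+1-l}$, so the exponent $k$ is split between the new and old variables. Concretely, $y\shaub z_{k}=\sum_{l=1}^{k-1}z_{l}z_{k+1-l}$, and the associated series is $\sum_{m<n}\sum_{l=1}^{k-1}m^{-l}n^{-(k+1-l)}$, which cannot be written as $n^{-k}\sum_{m}m^{-1}$ for a fixed $n$. The actual argument proceeds by rewriting this inner sum via the partial-fraction identity as $\frac{1}{m^{k-1}n(n-m)}-\frac{1}{n^{k}(n-m)}$, reinterpreting the two terms as new summation variables (one obtained by swapping $m\leftrightarrow n$), and then telescoping these ``transport'' terms from one $z$-letter to the next across the whole cycle. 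In other words, the telescoping happens at the level of the $\frac{1}{(\,\cdot\,)(n-m)}$ pieces, not at the level of a bare harmonic sum in $m$. Your write-up conflates the final answer of that telescoping with the intermediate object; if you carry out the partial-fraction step first and then track the boundary terms block-to-block (including the cyclic closure), the identity will emerge.
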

\begin{rem}
 Writing $w_i=z_{k_{i,1}}\cdots z_{k_{i,r_i}}$ for $i=1,\dots,d$, we find that the case $r_1=\cdots=r_d=1$ of Theorem \ref{cycrel} gives Theorem \ref{cycsum} (for details, see \cite[Section 5.1]{HMM19}). 
\end{rem}
\begin{rem}
 Recently, Onozuka and the author \cite{MO20} gave complex variable generalization of Theorem \ref{cycrel}. 
\end{rem}

\subsection{Finite multiple zeta(-star) values and the cyclic sum formulas}
We set a $\mathbb{Q}$-algebra $\mathcal{A}$ by
\[
 \mathcal{A}:=\biggl(\prod_{p}\mathbb{Z}/p\mathbb{Z}\biggr)\,\Big/\,\biggl(\bigoplus_{p}\mathbb{Z}/p\mathbb{Z\biggr)},
\]
where $p$ runs over all primes.
For positive integers $k_{1},\dots,k_{r}$,
the finite multiple zeta values (FMZVs) and the finite multiple zeta-star
values (FMZSVs) are defined by 
\begin{align*}
 \zeta_{\mathcal{A}}(k_{1},\dots,k_{r})
 &:=\biggl(\sum_{0<n_{1}<\cdots<n_{r}<p} \frac{1}{n_{1}^{k_{1}}\cdots n_{r}^{k_{r}}}\bmod p \biggr)_{p} \in\mathcal{A}, \\
 \zeta_{\mathcal{A}}^{\star}(k_{1},\dots,k_{r})
 &:=\biggl(\sum_{0<n_{1}\le\cdots\le n_{r}<p} \frac{1}{n_{1}^{k_{1}}\cdots n_{r}^{k_{r}}}\bmod p \biggr)_{p} \in\mathcal{A}
\end{align*}
(for details of FMZVs, see \cite{Kan19,KZ20}).

Many families of relations among MZ(S)Vs have analogues for FMZ(S)Vs. 
The counterparts of Theorem \ref{cycsum} for FMZ(S)Vs were obtained by Kawasaki and Oyama \cite{KO19}.
\begin{thm}[Cyclic sum formula for FMZ(S)Vs; Kawasaki--Oyama] \label{cycsumF}
 For a non-empty index $(k_1,\dots,k_r)$ with $(k_1,\dots,k_r)\ne (\underbrace{1,\dots,1}_{r})$, we have
 \begin{align*}
  &\sum_{l=1}^{r} \sum_{m=1}^{k_l-1}
   \zeta_\mathcal{A} (m,k_{l+1},\dots,k_r,k_1,\dots, k_{l-1}, k_{l}-m+1) \\
  &=\sum_{l=1}^{r} 
   \bigl(\zeta_\mathcal{A} (k_{l+1},\dots,k_r,k_1,\dots,k_{l-1},k_{l}+1)
    +\zeta_\mathcal{A} (k_{l+1}+1,k_{l+2},\dots,k_r,k_1,\dots,k_{l}) \\
  &\qquad\quad +\zeta_\mathcal{A} (1,k_{l+1},\dots,k_r,k_1,\dots,k_{l}) \bigr), \\
  &\sum_{l=1}^{r} \sum_{m=1}^{k_l-1}
   \zeta_\mathcal{A}^\star (m,k_{l+1},\dots,k_r,k_1,\dots, k_{l-1}, k_{l}-m+1) 
   =\sum_{l=1}^{r} \zeta_\mathcal{A}^\star (1,k_{l+1},\dots,k_r,k_1,\dots,k_{l}). 
 \end{align*}
\end{thm}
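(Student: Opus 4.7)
The plan is to adapt the partial-fraction proof of the classical cyclic sum formula (Theorem~\ref{cycsum}) to the mod-$p$ setting, and to identify the three additional families of summands on the right-hand side of the non-star identity as boundary residues arising from the truncation $n_i<p$ of the defining sum.

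I would address the non-star statement first. For each $l\in\{1,\dots,r\}$ the key step is to simplify the inner sum $\sum_{m=1}^{k_l-1}\zeta_{\mathcal{A}}(m,k_{l+1},\dots,k_{l-1},k_l-m+1)$ via the finite-field partial-fraction identity
\[
 \sum_{m=1}^{k-1}\frac{1}{a^m\,b^{k-m+1}}
 =\frac{1}{b-a}\Bigl(\frac{1}{a^k}-\frac{1}{b^k}\Bigr)-\frac{1}{a^k b}
 \qquad (a\not\equiv b\bmod p),
\]
applied with $(a,b)=(n_1,n_{r+1})$, the extremal summation variables in $0<n_1<\cdots<n_{r+1}<p$. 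This splits each inner sum into three pieces: the term $-\tfrac{1}{n_1^{k_l}\,n_{r+1}}$ (times the middle monomial), which after cyclic reindexing contributes to the family $\zeta_{\mathcal{A}}(1,k_{l+1},\dots,k_l)$; and two telescoping pieces involving $\tfrac{1}{n_{r+1}-n_1}$, which are the analogues of the terms handled in the classical Hoffman--Ohno proof.

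Summing over $l=1,\dots,r$ and introducing the new variable $N\equiv n_{r+1}-n_1\pmod p$, the telescoping pieces should be rewritten as FMZVs in the shifted indices $(N,n_2-n_1,\dots)$ \emph{plus} a sum of boundary residues coming from those values of $(n_1,n_{r+1})$ that, after the substitution, fall outside the strict ordering $0<n_1<\cdots<n_{r+1}<p$. These boundary residues should reassemble into precisely the remaining two families $\zeta_{\mathcal{A}}(k_{l+1},\dots,k_{l-1},k_l+1)$ and $\zeta_{\mathcal{A}}(k_{l+1}+1,k_{l+2},\dots,k_l)$. For the star version, I would deduce it from the non-star version using the standard expansion $\zeta_{\mathcal{A}}^{\star}(k_1,\dots,k_r)=\sum\zeta_{\mathcal{A}}(\cdots)$, where the sum runs over the $2^{r-1}$ ways of either separating consecutive entries by a comma or merging them by addition, and observing that after this expansion the three non-star families collapse into the single family $\zeta_{\mathcal{A}}^{\star}(1,k_{l+1},\dots,k_l)$.

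The main obstacle I foresee is the boundary bookkeeping: the extra families on the right-hand side have no counterpart in the MZV case, where the analogous boundary contributions vanish as $n_1\to\infty$. In the FMZV setting one must carefully account for the mod-$p$ residues produced by the change of variable $N=n_{r+1}-n_1$, and verify that, after summing over all cyclic rotations $l=1,\dots,r$, they assemble into the three families with the correct signs and multiplicities.
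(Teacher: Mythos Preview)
Your outline is a reasonable direct attack in the spirit of the original Kawasaki--Oyama argument (and of Hoffman--Ohno in the classical case), but it is \emph{not} how the present paper obtains Theorem~\ref{cycsumF}. In this paper the theorem is quoted from \cite{KO19}, and the only argument supplied here is the short Remark following Theorem~\ref{main}: one specializes the main cyclic relation (Theorem~\ref{main}) to the case $r_1=\cdots=r_d=1$, uses $\zeta_{\mathcal{A}}(k)=0$ to kill the right-hand side, and unpacks $y\sht z_k=\sum_{m=1}^{k-1}z_m z_{k+1-m}-y^2x^{k-1}$ to obtain the \emph{star} identity; the non-star identity is then invoked via the known equivalence (Remark after Theorem~\ref{cycsumF}). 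So the paper goes star-first via the general FCMZV machinery, whereas you go non-star-first by bare hands.

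What your route buys is self-containment: you never need the cyclic analogue $\zeta_{\mathcal{A}}^{\mathrm{cyc}}$ or Lemmas~\ref{lem1}--\ref{lem2}. What the paper's route buys is that all the delicate boundary bookkeeping you flag as the ``main obstacle'' has already been absorbed into the proof of Theorem~\ref{main}; the specialization is then a two-line remark.

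One caution about your sketch: the substitution $N\equiv n_{r+1}-n_1\pmod p$ and the phrase ``rewritten as FMZVs in the shifted indices $(N,n_2-n_1,\dots)$'' is the vaguest step, and it is exactly where the three extra families on the right-hand side must emerge. In the paper's analysis (Lemma~\ref{lem2} and the proof of Theorem~\ref{main}) the analogous step is not a single global shift but rather a pairwise swap of the auxiliary variable $n$ with $n_{i,j}$, together with a careful split of the summation domain according to whether $n_{i,r_i}<n_{i-1,1}$ (respectively $n_{i+1,r_{i+1}}<n_{i,1}$); the identities $\bigl(\sum_{m=a}^{p-a}1/m\bmod p\bigr)_p=0$ and $\bigl(\sum_{m=1}^{p-1}1/m\bmod p\bigr)_p=0$ are what finally make the boundary pieces collapse. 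If you pursue your direct proof, you should expect to need this kind of domain decomposition rather than a single change of variables, and the three right-hand families will appear as the $B_i$- and $C_i$-type residues rather than from a shift $n\mapsto n-n_1$.
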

\begin{rem}
 Note that the cyclic sum formulas for FMZVs and FMZSVs are equivalent (see \cite[Section 2]{MOno19} and \cite[Section 6]{HMOno19}). 
\end{rem}

\subsection{Main result}
To state our main theorem, we introduce the cyclic analogue of FMZVs (FCMZVs).  
\begin{defn}[Cyclic analogue of FMZVs]
 Let $d$ and $r_1,\dots,r_d$ be positive integers. 
 For a multi-index 
 $ \boldsymbol{k}=[\boldsymbol{k}_{1},\dots,\boldsymbol{k}_{d}]$ 
 with 
 $\boldsymbol{k}_i=(k_{i,1},\dots,k_{i,r_i}) \in\mathbb{Z}_{\ge1}^{r_i} \;(i=1,\ldots,d)$, we define
 \begin{align*}
  \zeta^{\mathrm{cyc}}_{\mathcal{A}} (\boldsymbol{k}) 
  :=\biggl( \sum_{S_p} \frac{ 1 }{ \boldsymbol{n}^{\boldsymbol{k}} } \bmod p \biggr)_{p} \in\mathcal{A},
 \end{align*}
 where we put
 $
 \boldsymbol{n}^{\boldsymbol{k}} 
 :=\boldsymbol{n}_{1}^{\boldsymbol{k}_{1}}\cdots\boldsymbol{n}_{d}^{\boldsymbol{k}_{d}}, \,
 \boldsymbol{n}_i^{\boldsymbol{k}_i} 
 :=n_{i,1}^{k_{i,1}} \cdots n_{i,r_i}^{k_{i,r_i}},
 $ 
 and
 \begin{align*}
  S_p&
  :=\{(n_{1,1},\dots,n_{d,r_{d}})\in\{ 1,\dots,p-1 \}^{r} 
  \mid 
  n_{1,1}<\cdots<n_{1,r_{1}},\dots, n_{d,1}<\cdots<n_{d,r_{d}}, \\
  &\qquad\qquad\qquad\qquad\qquad\qquad\qquad\qquad\quad
   n_{1,1} \le n_{2,r_{2}}, \dots, n_{d-1,1} \le n_{d,r_{d}}, n_{d,1} \le n_{1,r_{1}} \}. 
 \end{align*}
\end{defn}
%
  %

We denote by $\mathfrak{H}^{\mathrm{cyc}}$ the subspace of $\oplus_{d=1}^{\infty}\mathfrak{H}^{\otimes d}$ 
spanned by 
\[
 \bigcup_{d=1}^{\infty}
 \{w_{1}\otimes\cdots\otimes w_{d}\in\mathfrak{H}^{\otimes d} 
 \mid w_{1},\dots,w_{d}\in y\mathfrak{H} \}.
\]
We define a $\mathbb{Q}$-linear map $Z_{\mathcal{A}}^{\mathrm{cyc}}:\mathfrak{H}^\mathrm{cyc}\to\mathcal{A}$
by 
\[
 Z_{\mathcal{A}}^\mathrm{cyc} (z_{k_{1,1}}\cdots z_{k_{1,r_{1}}}\otimes\cdots\otimes z_{k_{d,1}}\cdots z_{k_{d,r_{d}}})
 :=\zeta_{\mathcal{A}}^\mathrm{cyc}([(k_{1,1},\dots,k_{1,r_{1}}),\dots,(k_{d,1},\dots,k_{d,r_{d}})]).
\]
\begin{thm}[Main theorem] \label{main}
 For $w_{1}\otimes\cdots\otimes w_{d}\in\mathfrak{H}^\mathrm{cyc}$, we have 
 \begin{align*}
  &\sum_{i=1}^{d} Z_{\mathcal{A}}^{\mathrm{cyc}}
   (w_{1}\otimes\cdots\otimes w_{i-1}\otimes(y\sht w_{i})\otimes w_{i+1}\otimes\cdots\otimes w_{d}) \\
  &=2\sum_{i=1}^{d} \sum_{j=1}^{r_i} 
   Z_{\mathcal{A}}^{\mathrm{cyc}}
   (w_{1}\otimes\cdots\otimes w_{i-1}\otimes 
    z_{k_{i,1}} \cdots z_{k_{i,j-1}} z_{k_{i,j}+1} z_{k_{i,j+1}} \cdots z_{k_{i,r_i}} 
    \otimes w_{i+1}\otimes\cdots\otimes w_{d}), 
 \end{align*} 
 where we put
 $w_i:=z_{k_{i,1}}\cdots z_{k_{i,r_i}}$
 and
 \begin{align*}
 &y\sht z_{k}:=
 \begin{cases}
  -y^2 &\textrm{if }k=1, \\
  0 &\textrm{if }k=2, \\
  yx(y\sh x^{k-3})x &\textrm{if }k\ge3, 
 \end{cases} \\
 &y\sht z_{k_1}\cdots z_{k_r}:=(y\sht z_{k_1})z_{k_2}\cdots z_{k_r}+\cdots+z_{k_1}\cdots z_{k_{r-1}}(y\sht z_{k_r}).
\end{align*} 
\end{thm}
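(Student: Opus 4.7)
The plan is to prove Theorem \ref{main} by direct calculation, following the template of the proof of Theorem \ref{cycrel} in \cite{HMM19} but replacing the analytic manipulations there by mod-$p$ congruences appropriate to the finite setting.

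First, I would observe that the three cases in the definition of $y \sht z_k$ can be unified as
\[
 y \sht z_k = y \sh z_k - 2 z_1 z_k - z_k z_1 \qquad (k \ge 1),
\]
which is verified directly from the explicit expansion $y \sh z_k = 2 z_1 z_k + z_k z_1 + \sum_{a+b=k+1,\,a,b\ge 2} z_a z_b$. In this form $y \sht z_k$ records precisely the \emph{interior} insertions of a new letter $y$ into $z_k = yx^{k-1}$. Expanding the LHS of Theorem \ref{main} by the Leibniz rule defining $y \sht$ on products and by this unified formula, one writes it as a sum over pairs $(i,j)$ of finite cyclic sums in which a new summation variable $m$ is inserted strictly between $n_{i,j-1}$ and $n_{i,j}$ in block $i$ (with the obvious modifications when $j = 1$ or $j = r_i$ where one of these neighbours is replaced by the appropriate cyclic index from an adjacent block).

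Next, for a fixed $(i,j)$, I would write the LHS contribution as an explicit sum over $S_p$ with $m$ ranging over $n_{i,j-1} < m < n_{i,j}$; the extra rational weight is $\sum_{a+b=k_{i,j}+1,\,a,b\ge 2} m^{-a} n_{i,j}^{-b}$. Using the classical partial-fraction identity
\[
 \sum_{a=1}^{k} \frac{1}{m^a\, n^{k+1-a}} = \frac{1}{n-m}\left(\frac{1}{m^k} - \frac{1}{n^k}\right)
\]
(and subtracting off the $a=1$ and $a=k$ boundary terms to get the interior sum), together with the Wolstenholme-type congruences $\sum_{m=1}^{p-1} m^{-k} \equiv 0 \pmod p$ for $k\ge 1$ and all but finitely many $p$, one collapses the $m$-sum to a combination of terms of the form $n_{i,j}^{-(k_{i,j}+1)}$. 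This produces precisely the shifted FCMZV in which $k_{i,j}$ is incremented by $1$, matching the corresponding summand on the RHS. The factor of $2$ on the RHS is expected to arise from a symmetry specific to the finite setting, most naturally the reflection $m \mapsto p - m \pmod p$, which identifies contributions of $m$ above and below $n_{i,j}$ modulo $p$.

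The main obstacle is the careful treatment of the cyclic boundary inequalities $n_{i,1} \le n_{i+1,r_{i+1}}$ (indices modulo $d$). When the inserted variable $m$ sits adjacent to a cyclic boundary — that is, when $j=1$ or $j=r_i$ — its ordering relative to the extreme index of the neighbouring block must be split into subcases, producing extra configurations that do not correspond to any term of the RHS. The terms $2 z_1 z_k + z_k z_1$ subtracted in the unified formula for $y \sht z_k$ are designed precisely to cancel these unwanted boundary configurations on the LHS; verifying that this cancellation is exact, globally across all $(i,j)$-pairs and all choices of cyclic neighbour, is the combinatorial heart of the proof and is where the bulk of the technical work will lie. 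This step plays the same role as the admissibility subtraction $-yw_i - w_iy$ in the definition of $y \shaub$ in Theorem \ref{cycrel}, but is more elaborate because in the finite setting there is no admissibility restriction and $m$ may genuinely coincide (modulo $p$) with a cyclic neighbour.
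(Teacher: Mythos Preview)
Your overall template --- partial fractions plus mod-$p$ harmonic identities plus boundary bookkeeping --- is indeed how the paper proceeds. But the central step of your plan, as written, does not go through, and the missing idea is precisely the one that makes the paper's argument work.

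The gap is in the sentence ``together with the Wolstenholme-type congruences $\sum_{m=1}^{p-1} m^{-k}\equiv 0\pmod p$ \ldots\ one collapses the $m$-sum to a combination of terms of the form $n_{i,j}^{-(k_{i,j}+1)}$.'' After your partial-fraction step the inserted variable $m$ ranges only over the subinterval $n_{i,j-1}<m<n_{i,j}$ (or $n_{i,j}<m<n_{i,j+1}$, depending on your convention), not over all of $\{1,\dots,p-1\}$, so the Wolstenholme congruence is simply not applicable term-by-term. No individual $(i,j)$-contribution collapses to the corresponding RHS summand. Relatedly, your guess that the factor~$2$ comes from the reflection $m\mapsto p-m$ is not the mechanism: that reflection sends the interval $(n_{i,j-1},n_{i,j})$ to a disjoint interval and does not pair anything on the LHS.

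What the paper actually does is this. After the partial fraction (its Lemma~\ref{lem1}), each $(i,j)$-piece splits into two parts, call them $D_{i,j}^{(1)}$ and $D_{i,j}^{(2)}$. In $D_{i,j}^{(2)}$ one performs the substitution that \emph{interchanges the inserted variable with $n_{i,j}$}; this shifts the summation range from $(n_{i,j},n_{i,j+1})$ to $(n_{i,j-1},n_{i,j})$. Only after this interchange can one pair $D_{i,j}^{(1)}$ with $D_{i,j+1}^{(2)}$: the ``difference'' harmonic pieces $1+\tfrac12+\cdots+\tfrac{1}{n_{i,j+1}-n_{i,j}-1}$ cancel, and what survives is $-2$ times the harmonic block $\tfrac{1}{n_{i,j}+1}+\cdots+\tfrac{1}{n_{i,j+1}-1}$. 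Summing these over $j$ and adding the $j=1$ and $j=r_i$ boundary pieces (the paper's $B_i$ and $C_i$) and then over $i$, the boundary pieces cancel cyclically against their neighbours, using only $\sum_{m=1}^{p-1}\tfrac{1}{m}\equiv 0$ and $\tfrac{1}{a}+\tfrac{1}{p-a}\equiv 0$. What remains is $2\sum_{S_p}\frac{1}{\boldsymbol{n}^{\boldsymbol{k}}\,n_{i,j}}$, which is exactly the RHS. So the factor~$2$ and the eventual use of Wolstenholme both emerge \emph{only after} the interchange-and-pair step; your plan tries to invoke Wolstenholme one step too early, before the sums have been reorganised into full-range harmonic sums. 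Finally, your description of the role of $2z_1z_k+z_kz_1$ is off: these terms are already absent from $y\sht z_k$ and there is nothing on the LHS for them to cancel; the genuine boundary cancellation is between the $j=r_i$ piece of block $i$ and the $j=1$ piece of block $i-1$, across the cyclic inequality, after the interchange.
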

\begin{ex}
 When $\boldsymbol{k}=[(1,2,3)]$, we have 
 \begin{align*}
  2\zeta_{\mathcal{A}} (1,2,4) +2\zeta_{\mathcal{A}} (1,3,3) +2\zeta_{\mathcal{A}} (2,2,3) 
  +\zeta_{\mathcal{A}} (1,1,2,3) -\zeta_{\mathcal{A}} (1,2,2,2)=0. 
 \end{align*}
 When $\boldsymbol{k}=[(1,3),(2)]$, we also have 
 \begin{align*}
  &2\zeta^{\mathrm{cyc}}_{\mathcal{A}} ([(1,3), (3)]) +2\zeta^{\mathrm{cyc}}_{\mathcal{A}} ([(1,4), (2)]) 
   +2\zeta^{\mathrm{cyc}}_{\mathcal{A}} ([(2,3), (2)]) \\
  &+\zeta^{\mathrm{cyc}}_{\mathcal{A}} ([(1,1,3), (2)]) 
   -\zeta^{\mathrm{cyc}}_{\mathcal{A}} ([(1,2,2), (2)])=0 
 \end{align*}
 and
 \begin{align*}
  &4\zeta_{\mathcal{A}}(1, 6) +2\zeta_{\mathcal{A}}(2, 5) +2\zeta_{\mathcal{A}}(3, 4) +4\zeta_{\mathcal{A}}(4, 3) 
   +\zeta_{\mathcal{A}}(1, 1, 5) +\zeta_{\mathcal{A}}(1, 2, 4) \\
  &+3\zeta_{\mathcal{A}}(1, 3, 3) -\zeta_{\mathcal{A}}(1, 4, 2) +2\zeta_{\mathcal{A}}(2, 2, 3) 
   +\zeta_{\mathcal{A}}(3, 1, 3) -\zeta_{\mathcal{A}}(3, 2, 2) \\
  &+\zeta_{\mathcal{A}}(1, 1, 2, 3) +\zeta_{\mathcal{A}}(1, 2, 1, 3) -2\zeta_{\mathcal{A}}(1, 2, 2, 2)=0.
 \end{align*}
\end{ex}
\begin{rem}
 The case $r_1=\cdots=r_d=1$ of Theorem \ref{main} implies Theorem \ref{cycsumF}. 
 In fact, by the theorem, we have
 \begin{align*}
  &\sum_{i=1}^{d} Z_{\mathcal{A}}^{\mathrm{cyc}}
   (z_{k_{1}}\otimes\cdots\otimes z_{k_{i-1}}\otimes(y\sht z_{k_{i}})\otimes z_{k_{i+1}}\otimes\cdots\otimes z_{k_{d}}) \\
  &=2\sum_{i=1}^{d} \sum_{j=1}^{r_i} 
   Z_{\mathcal{A}}^{\mathrm{cyc}}
   (z_{k_{1}}\otimes\cdots\otimes z_{k_{i-1}}\otimes 
    z_{k_{i}+1} 
    \otimes z_{k_{i+1}}\otimes\cdots\otimes z_{k_{d}}).  
 \end{align*} 
 Note that the R.H.S.\ of the above equality equals $0$ by the well-known identity of FMZVs $\zeta_{\mathcal{A}}(k)=0$.
 Since $y\sht z_k=y(y\sh x^{k-2})x -y^2x^{k-1}=\sum_{m=1}^{k-1} z_m z_{k+1-m} -y^2x^{k-1}$ and by the definition of FCMZVs, 
 we get the second statement of Theorem \ref{cycsumF}. 
\end{rem}

\section{Proof of Theorem \ref{main}}
Let
 \begin{align*}
  S_p^{(1)}&:=\{(n_{1,1},\dots,n_{d,r_{d}})\in\{ 1,\dots,p-1 \}^{r}  
   \mid 
   n_{1,1}<\cdots<n_{1,r_{1}}, \dots, n_{d,1}<\cdots<n_{d,r_{d}}, \\
  &\qquad\qquad\qquad\qquad\qquad\qquad\qquad\qquad\qquad\qquad\quad n_{1,1} \le n_{2,r_{2}}, \dots, n_{d-1,1} \le n_{d,r_{d}} \}, \\
  S_p^{(i)}&:=\{(n_{1,1},\dots,n_{d,r_{d}})\in\{ 1,\dots,p-1 \}^{r}  
   \mid 
   n_{1,1}<\cdots<n_{1,r_{1}}, \dots, n_{d,1}<\cdots<n_{d,r_{d}}, \\
  &\qquad n_{1,1} \le n_{2,r_{2}}, \dots, n_{i-2,1} \le n_{i-1,r_{i-1}}, 
   n_{i,1} \le n_{i+1,r_{i+1}}, \dots, n_{d-1,1} \le n_{d,r_{d}}, n_{d,1} \le n_{1,r_{1}} \} \\
   &\qquad\qquad\qquad\qquad\qquad\qquad\qquad\qquad\qquad\qquad\qquad\qquad\qquad\qquad\qquad\qquad\quad (i\ne1).
 \end{align*}
In the following, we understand $n_{0,j}=n_{d,j}$ and $n_{d+1,j}=n_{1,j}$.
\begin{lem} \label{lem1}
 For a positive integer $i$ with $1\le i\le d$ and $w_{1}\otimes\cdots\otimes w_{d}\in\mathfrak{H}^\mathrm{cyc}$, we have 
 \begin{align*} 
  &Z_{\mathcal{A}}^{\mathrm{cyc}}
   (w_{1}\otimes\cdots\otimes w_{i-1}\otimes(y\sht w_{i})\otimes w_{i+1}\otimes\cdots\otimes w_{d}) \\
  &=\biggl( 
   \sum_{j=1}^{r_i-1} \sum_{\substack{ S_p \\ n_{i,j}<n<n_{i,j+1} }} 
    \frac{ 1 }{ \boldsymbol{n}^{\boldsymbol{k}} } 
    \biggl( \frac{ n_{i,j} }{ n(n-n_{i,j}) } -\frac{ n_{i,j}^{k_{i,j}-1} }{ n^{k_{i,j}-1} (n-n_{i,j}) } \biggr) \\
   &\qquad +\sum_{\substack{ S_p^{(i)} \\ n_{i,r_i}<n<p \\ n_{i-1,1}\le n<p }} 
   \frac{ 1 }{ \boldsymbol{n}^{\boldsymbol{k}} } 
    \biggl( \frac{ n_{i,r_i} }{ n(n-n_{i,r_i}) } -\frac{ n_{i,r_i}^{k_{i,r_i}-1} }{ n^{k_{i,r_i}-1} (n-n_{i,r_i}) } \biggr) 
   \bmod p \biggr)_{p}, 
 \end{align*}
 where we understand $n_{i,r_i+1}=p$ for all $i$ with $1\le i\le d$. 
\end{lem}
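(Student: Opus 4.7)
The plan is to unfold $y\sht w_{i}$ via its derivation property, interpret each summand as an FCMZV in which one fresh summation variable has been inserted into the $i$-th block, and then collapse the resulting finite sum over the shuffle expansion by a partial-fraction identity. By definition,
\[
 y\sht w_{i}=\sum_{j=1}^{r_i} z_{k_{i,1}}\cdots z_{k_{i,j-1}}(y\sht z_{k_{i,j}})z_{k_{i,j+1}}\cdots z_{k_{i,r_i}},
\]
and a short calculation using $y\sh x^{k-3}=\sum_{a=0}^{k-3} x^{a}yx^{k-3-a}$ gives $y\sht z_{k}=\sum_{b=2}^{k-1} z_{b}z_{k+1-b}$ for $k\ge 3$, together with the special values $y\sht z_{1}=-z_{1}z_{1}$ and $y\sht z_{2}=0$. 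Substituting $z_{b}z_{k_{i,j}+1-b}$ for $z_{k_{i,j}}$ inside $w_{i}$ produces a word with one extra letter, so under $Z_{\mathcal{A}}^{\mathrm{cyc}}$ it corresponds to an FCMZV with one additional summation variable in the $i$-th block. I will adopt the bookkeeping convention that the original label $n_{i,j}$ stays with the leftmost of the two new letters (carrying exponent $b$), and denote the newly introduced variable by $n$, so that automatically $n_{i,j}<n$.

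The core computation is then the partial-fraction identity
\[
 \sum_{b=2}^{k-1}\frac{1}{a^{b}c^{k+1-b}}=\frac{1}{a^{k-1}c(c-a)}-\frac{1}{a\,c^{k-1}(c-a)}\qquad(c>a,\ k\ge 3),
\]
which is proved by iterating the elementary partial fraction $\frac{1}{a(c-a)}=\frac{1}{ac}+\frac{1}{c(c-a)}$. Applying it with $a=n_{i,j}$, $c=n$, $k=k_{i,j}$ and multiplying through by $n_{i,j}^{k_{i,j}}$ to restore the full denominator $\boldsymbol{n}^{\boldsymbol{k}}$ reproduces exactly the parenthesised factor $\frac{n_{i,j}}{n(n-n_{i,j})}-\frac{n_{i,j}^{k_{i,j}-1}}{n^{k_{i,j}-1}(n-n_{i,j})}$ in the statement. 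A direct check covers the degenerate cases $k_{i,j}=1$ (where $y\sht z_{1}=-z_{1}z_{1}$ produces the single factor $-1/n$ after simplification of the bracket) and $k_{i,j}=2$ (where both $y\sht z_{2}$ and the bracketed factor vanish), so the formula is uniform in $k_{i,j}$.

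It remains to pin down the range of the inserted variable $n$. For $1\le j\le r_{i}-1$ the insertion is interior to block $i$: the variable $n$ merely fills the gap $n_{i,j}<n<n_{i,j+1}$, and every cyclic constraint defining $S_{p}$ is untouched, producing the first sum of the lemma. For $j=r_{i}$ the situation is more delicate, as the new $n$ becomes the maximum of block $i$; hence the original cyclic inequality $n_{i-1,1}\le n_{i,r_{i}}$ must be replaced by $n_{i-1,1}\le n$. This is exactly what $S_{p}^{(i)}$ encodes: the old constraint is dropped from the defining set, and the new one is imposed directly on $n$ together with $n_{i,r_{i}}<n<p$. I expect the main obstacle to lie precisely in this boundary bookkeeping at $j=r_{i}$ (and in particular the cyclic wrap-around when $i=1$), where one must verify carefully that summing over $S_{p}^{(i)}$ with the two added inequalities on $n$ gives exactly the admissible tuples for the enlarged multi-index, with no undercounting or overcounting across the cyclic boundary.
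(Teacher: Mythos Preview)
Your proposal is correct and matches the paper's own proof almost step for step: both expand $y\sht z_{k}$ into $\sum_{b=2}^{k-1}z_{b}z_{k+1-b}$ (with the two degenerate cases handled separately), insert the new variable $n$ into the $i$-th block with the appropriate range (using $S_{p}$ for interior $j$ and $S_{p}^{(i)}$ with the extra constraints $n_{i,r_i}<n$, $n_{i-1,1}\le n$ for $j=r_i$), and then collapse the finite sum over $b$ by the same geometric/partial-fraction computation. Your boundary bookkeeping at $j=r_{i}$ is exactly what the paper leaves implicit in the phrase ``by definitions''.
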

\begin{proof}
 Let $w_i=z_{k_{i,1}}\cdots z_{k_{i,r_i}}$. 
 By definitions, we have 
 \begin{align*}
  \textrm{L.H.S.} 
  =\biggl( 
   \sum_{j=1}^{r_i-1} 
   \sum_{\substack{ S_p \\ n_{i,j}<n<n_{i,j+1} }} 
   \frac{ N_{k_{i,j}} }{ \boldsymbol{n}^{\boldsymbol{k}} } 
   +\sum_{\substack{ S_p^{(i)} \\ n_{i,r_i}<n<p \\ n_{i-1,1}\le n<p }} 
    \frac{ N_{k_{i,r_i}} }{ \boldsymbol{n}^{\boldsymbol{k}} } 
   \quad \bmod p \biggr)_{p}, 
  \end{align*}
 where
 \begin{align*}
  N_{k_{i,j}}
  =
  \begin{cases}
  \displaystyle{
  -\frac{1}{n}
  } & \textrm{if } k_{i,j}=1,
  \\  
  \;\; 0 & \textrm{if } k_{i,j}=2,
  \\  
  \displaystyle{
  \sum_{m=2}^{k_{i,j}-1} 
  \frac{ n_{i,j}^{k_{i,j}} }{ n_{i,j}^m n^{k_{i,j}-m+1} }
  } & \textrm{if } k_{i,j}\ge3
  \end{cases}
 \end{align*}
 for $j=1,\ldots,r_i$.
 Since 
 \[
  N_{1}
  =\frac{ n_{i,j} }{ n(n-n_{i,j}) } -\frac{ 1 }{ n-n_{i,j} }
 \] 
 and
 \begin{align*}
  N_{k_{i,j}} 
  &=\frac{ n_{i,j}^{k_{i,j}-1} ( 1-(n/n_{i,j})^{k_{i,j}-2} ) }{ n^{k_{i,j}-1}(n_{i,j}-n) } \\
  &=\frac{ n_{i,j} }{ n (n-n_{i,j}) } 
   -\frac{ n_{i,j}^{k_{i,j}-1}  }{ n^{k_{i,j}-1} (n-n_{i,j}) }
   \qquad (k_{i,j}\ge3),
 \end{align*} 
 we obtain the result. 
\end{proof}

For positive integers $p$ and $i$ with $1\le i\le d$, and a multi-index $\boldsymbol{k}$, let
\begin{align*}
 A_i&:=A(p,i,\boldsymbol{k}) \\
 &:=\sum_{ S_p } 
  \frac{1}{\boldsymbol{n}^{\boldsymbol{k}}} \left(
  \biggl( \frac{ 1 }{ n_{i,1}+1 }+\cdots+\frac{ 1 }{ n_{i,2}-1 } \biggr) +\cdots+
  \biggl( \frac{ 1 }{ n_{i,r_{i}-1}+1 }+\cdots+\frac{ 1 }{ n_{i,r_i}-1 } \biggr) 
  \right), \\
 B_i&:=B(p,i,\boldsymbol{k}) \\ 
 &:=\sum_{ S_p^{(i)} } 
   \displaystyle 
    \frac{1}{\boldsymbol{n}^{\boldsymbol{k}}} \biggl( 
     \biggl( \frac{1}{\max\{ 1, n_{i-1,1}-n_{i,r_i} \}}+\cdots+\frac{1}{ p-n_{i,r_i}-1 } \biggr) \\
  &\qquad\qquad\qquad\qquad\qquad\qquad\qquad\qquad -
   \biggl( \frac{1}{ \max\{ n_{i,r_i}+1, n_{i-1,1} \} } +\cdots+\frac{ 1 }{ p-1 } \biggr) 
   \biggr), \\ 
 C_i&:=C(p,i,\boldsymbol{k}) \\ 
 &:=\sum_{ S_p^{(i+1)} } 
   \displaystyle 
    \frac{1}{\boldsymbol{n}^{\boldsymbol{k}}} \biggl( 
    \biggl( \frac{1}{ \max\{ 1, n_{i,1}-n_{i+1,r_{i+1}} \} }+\cdots+\frac{1}{ n_{i,1}-1 } \biggr) \\
  &\qquad\qquad\qquad\qquad\qquad\qquad\qquad\qquad\quad\,\, +
   \biggl( 1+\cdots+\frac{1}{ \min\{ n_{i,1}-1, n_{i+1,r_{i+1}} \}} \biggr) 
   \biggr).
\end{align*}
\begin{lem} \label{lem2}
For positive integers $p$ and $i$ with $1\le i\le d$, and a multi-index $\boldsymbol{k}$, we have 
 \begin{align*} 
  \sum_{j=1}^{r_i-1} \sum_{\substack{ S_p \\ n_{i,j}<n<n_{i,j+1} }} 
   \frac{ 1 }{ \boldsymbol{n}^{\boldsymbol{k}} } 
    \biggl( \frac{ n_{i,j} }{ n(n-n_{i,j}) } -\frac{ n_{i,j}^{k_{i,j}-1} }{ n^{k_{i,j}-1} (n-n_{i,j}) } \biggr)& \\
   +\sum_{\substack{ S_p^{(i)} \\ n_{i,r_i}<n<p \\ n_{i-1,1}\le n<p }} 
   \frac{ 1 }{ \boldsymbol{n}^{\boldsymbol{k}} } 
    \biggl( \frac{ n_{i,r_i} }{ n(n-n_{i,j}) } -\frac{ n_{i,r_i}^{k_{i,r_i}-1} }{ n^{k_{i,r_i}-1} (n-n_{i,r_i}) } \biggr) 
  &=-2A_i+B_i-C_i.
 \end{align*}
\end{lem}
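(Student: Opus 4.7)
The plan is to prove the identity by a direct computation, expanding each kernel $N_{k_{i,j}} = \frac{n_{i,j}}{n(n-n_{i,j})} - \frac{n_{i,j}^{k_{i,j}-1}}{n^{k_{i,j}-1}(n-n_{i,j})}$ via partial fractions and matching the resulting pieces with the explicit formulas defining $A_i$, $B_i$, and $C_i$.

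First, I would recognise two key identifications from the definitions themselves. The two harmonic partial sums defining $B_i$ rewrite as $\sum_n \frac{1}{n-n_{i,r_i}}$ and $\sum_n \frac{1}{n}$ over $[\max(n_{i,r_i}+1,n_{i-1,1}),p-1]$, whose difference telescopes via $\frac{1}{n-a}-\frac{1}{n}=\frac{a}{n(n-a)}$ to $\sum_n \frac{n_{i,r_i}}{n(n-n_{i,r_i})}$. An analogous rewriting identifies $C_i$ with $\sum_n \frac{n_{i,1}}{n(n_{i,1}-n)}$ over $[1,\min(n_{i,1}-1,n_{i+1,r_{i+1}})]$ attached to $S_p^{(i+1)}$. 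Moreover, the $-\frac{1}{n}$ piece of $\frac{n_{i,j}}{n(n-n_{i,j})} = \frac{1}{n-n_{i,j}}-\frac{1}{n}$, summed over the inner gaps, assembles directly into $-A_i$ by definition.

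Second, I apply the partial fraction identity
\[
 \frac{a^{k-1}}{n^{k-1}(n-a)} = \frac{1}{n-a} - \sum_{b=0}^{k-2} \frac{a^b}{n^{b+1}}
\]
(verified by clearing denominators) to the subtracted term of each $N_{k_{i,j}}$. The $\frac{1}{n-a}$ produced by this expansion cancels the $\frac{1}{n-a}$ already present from the first summand of $N_{k_{i,j}}$, leaving $N_{k_{i,j}} = -\frac{1}{n} + \sum_{b=0}^{k_{i,j}-2} \frac{n_{i,j}^b}{n^{b+1}}$, with an analogous reduction for $N_{k_{i,r_i}}$ on the boundary.

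Third, the residual contributions $\frac{n_{i,j}^b}{\boldsymbol{n}^{\boldsymbol{k}}\,n^{b+1}}$ for $b\ge 0$ (inner and boundary) are to be recombined with the $-\frac{1}{n}$ pieces to produce the remaining $-A_i$, $B_i$, and $-C_i$. Resumming via the closed-form geometric series $\sum_{b=0}^{k-2} \frac{n_{i,j}^b}{n^{b+1}} = \frac{1}{n-n_{i,j}}\bigl(1-(n_{i,j}/n)^{k-1}\bigr)$ allows each residual to be interpreted via the substitution $n_{i,j}^{k_{i,j}}\to n_{i,j}^{k_{i,j}-b}$ inside $\boldsymbol{n}^{\boldsymbol{k}}$, and summation over $n$ in the gap, combined with the cyclic wrap-around constraint $n_{d,1}\le n_{1,r_1}$ that links $S_p^{(i)}$ for the $i$-th upper boundary to $S_p^{(i+1)}$ for the $(i{+}1)$-th lower boundary, rearranges them into the form required to produce the second $-A_i$ and the $-C_i$.

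I expect the main obstacle to be the precise combinatorial bookkeeping in this last step: extracting exactly the second $-A_i$ and the $-C_i$ contributions from the residual sums requires careful re-indexing of the inserted variable $n$ relative to the existing $n_{i,j}$'s, and may additionally invoke the Wolstenholme-type identity $\sum_{n=1}^{p-1} 1/n^c \equiv 0 \pmod p$ to push partial sums of $1/n^c$ into the complementary ranges aligned with $A_i$, $B_i$, and $C_i$.
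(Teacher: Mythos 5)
Your opening moves match the paper's: splitting each kernel by partial fractions, recognising that the boundary term with kernel $\frac{n_{i,r_i}}{n(n-n_{i,r_i})}=\frac{1}{n-n_{i,r_i}}-\frac{1}{n}$ summed over $S_p^{(i)}$ is exactly $B_i$, and that the $-\frac1n$ pieces over the interior gaps assemble into one copy of $-A_i$. But the heart of the lemma is missing. The paper handles the subtracted terms $D^{(2)}_{i,j}:=\sum \frac{1}{\boldsymbol{n}^{\boldsymbol{k}}}\frac{n_{i,j}^{k_{i,j}-1}}{n^{k_{i,j}-1}(n-n_{i,j})}$ by \emph{interchanging} the inserted variable $n$ with the adjacent variable $n_{i,j}$: since the summand depends on the pair only through $\frac{1}{n_{i,j}\,n^{k_{i,j}-1}(n-n_{i,j})}$, relabelling the larger of the two as $n_{i,j}$ turns it into $\frac{1}{\boldsymbol{n}^{\boldsymbol{k}}}\cdot\frac{n_{i,j}}{n(n_{i,j}-n)}$ with $n$ now running over the \emph{lower} gap $(n_{i,j-1},n_{i,j})$. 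This single move (i) makes the kernel independent of $k_{i,j}$, (ii) for $j=1$ converts the constraint $n_{i,1}\le n_{i+1,r_{i+1}}$ into a constraint on $n$, so that $D^{(2)}_{i,1}=C_i$ over $S_p^{(i+1)}$, and (iii) aligns $D^{(2)}_{i,j+1}$ with $D^{(1)}_{i,j}$ over the same gap, so the ``shifted'' blocks $1+\cdots+\frac{1}{n_{i,j+1}-n_{i,j}-1}$ cancel and each interior gap contributes $-2\bigl(\frac{1}{n_{i,j}+1}+\cdots+\frac{1}{n_{i,j+1}-1}\bigr)$, giving the full $-2A_i$. Your proposal never identifies this swap; you only gesture at ``careful re-indexing,'' and without it the shifted blocks produced by $\frac{1}{n-n_{i,j}}$ are never cancelled and the second copy of $-A_i$ and the $-C_i$ do not appear.

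Moreover, your suggested fallback is not available: Lemma \ref{lem2} is an exact identity of rational numbers for each $p$ (no reduction mod $p$ occurs until the proof of Theorem \ref{main}), so Wolstenholme-type congruences $\sum_{n=1}^{p-1}1/n^c\equiv 0 \pmod p$ cannot be invoked, and in any case your residual power sums $\sum_n n_{i,j}^b/n^{b+1}$ run over gaps $(n_{i,j},n_{i,j+1})$ rather than the full range, so such congruences would not apply even modulo $p$. Note also that expanding the kernel into the power sum $\sum_{b}n_{i,j}^b/n^{b+1}$ and then ``resumming the geometric series'' merely returns you to the closed form you started from; the expansion actually destroys the symmetry $\frac{1}{n_{i,j}n^{k-1}(n-n_{i,j})}\leftrightarrow\frac{1}{n\,n_{i,j}^{k-1}(n_{i,j}-n)}$ that the interchange exploits, since swapping variables term by term in a power term does not preserve the weight $\frac{1}{\boldsymbol{n}^{\boldsymbol{k}}}$. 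So the proposal as written has a genuine gap exactly where the lemma's main idea lies.
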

\begin{proof}
 Let
 \begin{align*}
  D_{i,j}^{(1)}
  &:=\sum_{\substack{ S_p \\ n_{i,j}<n<n_{i,j+1} }} 
   \frac{ 1 }{ \boldsymbol{n}^{\boldsymbol{k}} } 
   \frac{ n_{i,j} }{ n(n-n_{i,j}) }
   \quad (j\ne r_i), 
  &D_{i,r_i}^{(1)}
  &:=\sum_{\substack{ S_p^{(i)} \\ n_{i,r_i}<n<p \\ n_{i-1,1}\le n<p }} 
   \frac{ 1 }{ \boldsymbol{n}^{\boldsymbol{k}} } 
   \frac{ n_{i,r_i} }{ n(n-n_{i,r_i}) }, \\
  D_{i,j}^{(2)}
  &:=\sum_{\substack{ S_p \\ n_{i,j}<n<n_{i,j+1} }} 
   \frac{ 1 }{ \boldsymbol{n}^{\boldsymbol{k}} } 
   \frac{ n_{i,j}^{k_{i,j}-1} }{ n^{k_{i,j}-1} (n-n_{i,j}) } 
   \quad (j\ne r_i), 
  &D_{i,r_i}^{(2)}
  &:=\sum_{\substack{ S_p^{(i)} \\ n_{i,r_i}<n<p \\ n_{i-1,1}\le n<p }}
   \frac{ 1 }{ \boldsymbol{n}^{\boldsymbol{k}} } 
   \frac{ n_{i,r_i}^{k_{i,r_i}-1} }{ n^{k_{i,r_i}-1} (n-n_{i,r_i}) }.
 \end{align*}
 Then we have
 \begin{align} 
 \begin{split} \label{eq1}
  D_{i,j}^{(1)}
  &=\sum_{\substack{ S_p \\ n_{i,j}<n<n_{i,j+1} }} 
   \frac{ 1 }{ \boldsymbol{n}^{\boldsymbol{k}} } 
   \biggl( \frac{ 1 }{ n-n_{i,j} } -\frac{ 1 }{ n } \biggr) \\
  &=\sum_{S_p } 
   \frac{ 1 }{ \boldsymbol{n}^{\boldsymbol{k}} } 
   \biggl(
   \biggl( 1+\cdots+ \frac{ 1 }{ n_{i,j+1}-n_{i,j}-1 } \biggr) 
   -\biggl( \frac{ 1 }{ n_{i,j}+1 }+\cdots+ \frac{ 1 }{ n_{i,j+1}-1 } \biggr)
   \biggr)
 \end{split}
 \end{align}
 for $j=1,\dots,r_i-1$, and
 \begin{align*}
  D_{i,r_i}^{(1)}
  =\sum_{\substack{ S_p^{(i)} \\ n_{i,r_i}<n<p \\ n_{i-1,1}\le n<p }}
   \frac{ 1 }{ \boldsymbol{n}^{\boldsymbol{k}} } 
   \biggl( \frac{ 1 }{ n-n_{i,r_i} } -\frac{ 1 }{ n } \biggr) 
  =B_i.
 \end{align*}
 We also have
 \begin{align*}
  D_{i,j}^{(2)}
  %
  &=\sum_{\substack{ S_p \\ n_{i,j}<n<n_{i,j+1} }} 
   \frac{ n_{i,j}^{k_{i,j}} }{ \boldsymbol{n}^{\boldsymbol{k}} } 
   \frac{ 1 }{ n_{i,j} n^{k_{i,j}-1} (n-n_{i,j}) } \\
  &=\sum_{\substack{ S_p \\ n_{i,j-1}<n<n_{i,j} }} 
   \frac{ n_{i,j}^{k_{i,j}} }{ \boldsymbol{n}^{\boldsymbol{k}} } 
   \frac{ 1 }{ n_{i,j}^{k_{i,j}-1} n (n_{i,j}-n) }  
 \end{align*}
 for $j=2,\dots,r_i-1$. 
 In the above equality, we interchanged $n$ and $n_{i,j}$. 
 Then we have
 \begin{align}
 \begin{split} \label{eq2}
  D_{i,j}^{(2)}
  &=\sum_{\substack{ S_p \\ n_{i,j-1}<n<n_{i,j} }}
   \frac{ 1 }{ \boldsymbol{n}^{\boldsymbol{k}} } 
   \biggl( \frac{ 1 }{ n_{i,j}-n } +\frac{ 1 }{ n } \biggr) \\
  &=\sum_{S_p } 
   \frac{ 1 }{ \boldsymbol{n}^{\boldsymbol{k}} } 
   \biggl(
   \biggl( 1+\cdots+ \frac{ 1 }{ n_{i,j}-n_{i,j-1}-1 } \biggr) 
   +\biggl( \frac{ 1 }{ n_{i,j-1}+1 }+\cdots+ \frac{ 1 }{ n_{i,j}-1 } \biggr)
   \biggr)
 \end{split}
 \end{align}
 for $j=2,\dots,r_i-1$. 
 Similarly, we have
 \begin{align*}
  D_{i,1}^{(2)}
  &=\sum_{\substack{ S_p^{(i+1)} \\ 1\le n<n_{i,1} \\ 1\le n\le n_{i+1,r_{i+1}} }}
   \frac{ 1 }{ \boldsymbol{n}^{\boldsymbol{k}} } 
   \biggl( \frac{ 1 }{ n_{i,1}-n } +\frac{ 1 }{ n } \biggr) 
  =C_i
 \end{align*}
 and
 \begin{align}
 \begin{split} \label{eq3}
  D_{i,r_i}^{(2)}
  &=\sum_{S_p }
   \frac{ 1 }{ \boldsymbol{n}^{\boldsymbol{k}} } 
   \biggl(
    \biggl( 1+\cdots+ \frac{ 1 }{ n_{i,r_i}-n_{i,r_i-1}-1 } \biggr) 
    +\biggl( \frac{ 1 }{ n_{i,r_i-1}+1 }+\cdots+ \frac{ 1 }{ n_{i,r_i}-1 } \biggr)
   \biggr). 
 \end{split}
 \end{align}
 From \eqref{eq1}, \eqref{eq2}, and \eqref{eq3}, we have
 \begin{align*}
  D_{i,j}^{(1)}-D_{i,j+1}^{(2)}
  =-2\sum_{S_p } 
   \frac{ 1 }{ \boldsymbol{n}^{\boldsymbol{k}} } 
   \biggl( \frac{ 1 }{ n_{i,j}+1 }+\cdots+ \frac{ 1 }{ n_{i,j+1}-1 } \biggr)
 \end{align*}
 for $j=1,\dots,r_i-1$.
 Hence we find the result. 
\end{proof}

\begin{proof}[Proof of Theorem \ref{main}]
 Note that
\begin{align*}
 B_i
 &=\sum_{ S_p } 
   \frac{1}{\boldsymbol{n}^{\boldsymbol{k}}} \biggl( 
   \biggl( 1+\cdots+\frac{1}{ p-n_{i,r_i}-1 } \biggr)
   -\biggl( \frac{1}{ n_{i,r_i}+1 } +\cdots+\frac{ 1 }{ p-1 } \biggr) 
   \biggr), \\
  &\quad +\sum_{\substack{ S_p^{(i)} \\ n_{i,r_i}<n_{i-1,1} }}  
    \frac{1}{\boldsymbol{n}^{\boldsymbol{k}}} \biggl( 
     \biggl( \frac{1}{ n_{i-1,1}-n_{i,r_i} }+\cdots+\frac{1}{ p-n_{i,r_i}-1 } \biggr) 
   -\biggl( \frac{1}{ n_{i-1,1} } +\cdots+\frac{ 1 }{ p-1 } \biggr) 
   \biggr)
 \end{align*}
 and
 \begin{align*}  
  C_i 
  &=2\sum_{ S_p } 
    \frac{1}{\boldsymbol{n}^{\boldsymbol{k}}}  
    \biggl( 1+\cdots+\frac{1}{ n_{i,1}-1 } \biggr) \\
  &\quad +\sum_{\substack{ S_p^{(i+1)} \\ n_{i+1,r_{i+1}}<n_{i,1} }}
    \frac{1}{\boldsymbol{n}^{\boldsymbol{k}}} \biggl( 
    \biggl( \frac{1}{ n_{i,1}-n_{i+1,r_{i+1}} }+\cdots+\frac{1}{ n_{i,1}-1 } \biggr) 
   +\biggl( 1+\cdots+\frac{1}{ n_{i+1,r_{i+1}} } \biggr) 
   \biggr).
 \end{align*}
 Since
 \begin{align*}
  \biggl( \frac{ 1 }{ a }+\cdots+\frac{ 1 }{ p-a } \bmod{p} \biggr)_p =0,
 \end{align*}
 we have
 \begin{align*}
  &\biggl( \frac{1}{ n_{i-1,1}-n_{i,r_i} }+\cdots+\frac{1}{ p-n_{i,r_i}-1 } \bmod{p} \biggr)_p \\
  &=\biggl( -\biggl( \frac{1}{ n_{i,r_i}+1 }+\cdots+\frac{1}{ n_{i-1,1}-n_{i,r_i}-1 } \biggr) \bmod{p} \biggr)_p. 
 \end{align*}
 By Lemmas \ref{lem1} and \ref{lem2}, we have
 \begin{align*}
  &Z_{\mathcal{A}}^{\mathrm{cyc}}
   (w_{1}\otimes\cdots\otimes w_{i-1}\otimes(y\sht w_{i})\otimes w_{i+1}\otimes\cdots\otimes w_{d}) \\
  &=(-2A_i+B_i-C_i \bmod{p} )_p \\
  &=\biggl( -2\sum_{ S_p } 
   \frac{1}{\boldsymbol{n}^{\boldsymbol{k}}} 
   \biggl( \sum_{m=1}^{p-1} \frac{ 1 }{ m } -\biggl( \frac{1}{ n_{i,1} } +\cdots+\frac{ 1 }{ n_{i,r_i} } \biggr) \biggr) \\
  &\quad -\sum_{\substack{ S_p^{(i)} \\ n_{i,r_i}<n_{i-1,1} }}  
   \frac{1}{\boldsymbol{n}^{\boldsymbol{k}}} 
   \biggl( 
    \biggl( \frac{1}{ n_{i,r_i}+1 }+\cdots+\frac{1}{ n_{i-1,1}-n_{i,r_i}-1 } \biggr) 
    +\biggl( \frac{1}{ n_{i-1,1} } +\cdots+\frac{ 1 }{ p-1 } \biggr) 
   \biggr) \\
  &\quad -\sum_{\substack{ S_p^{(i+1)} \\ n_{i+1,r_{i+1}}<n_{i,1} }}
    \frac{1}{\boldsymbol{n}^{\boldsymbol{k}}} \biggl( 
    \biggl( \frac{1}{ n_{i,1}-n_{i+1,r_{i+1}} }+\cdots+\frac{1}{ n_{i,1}-1 } \biggr) 
   +\biggl( 1+\cdots+\frac{1}{ n_{i+1,r_{i+1}} } \biggr) 
   \biggr) 
  \bmod{p}\biggr)_p.
 \end{align*}
 Then we have  
 \begin{align*}
  &\sum_{i=1}^{d} Z_{\mathcal{A}}^{\mathrm{cyc}}
   (w_{1}\otimes\cdots\otimes w_{i-1}\otimes(y\sht w_{i})\otimes w_{i+1}\otimes\cdots\otimes w_{d}) \\
  &=\biggl( 2\sum_{i=1}^{d} \sum_{j=1}^{r_i} \sum_{ S_p } 
   \frac{ 1 }{ \boldsymbol{n}^{\boldsymbol{k}} n_{i,j} } 
   \bmod{p} \biggr)_p.
 \end{align*}
 This finishes the proof. 
\end{proof}

\begin{que}
 It is known that the cyclic relation (Theorem \ref{cycrel}) includes the derivation relation for MZVs 
 obtained by Ihara, Kaneko, and Zagier  \cite[Theorem 3]{IKZ06}). 
 However, Theorem \ref{main} does not include the derivation relation for FMZVs proved by the author \cite{Mur17}.
 Are there any generalizations of Theorem \ref{main} that include this relation? 
 Let $R_x(w):=wx$ for $w\in\mathfrak{H}$. 
 Then, by Theorem \ref{cycrel} and the arguments in \cite[the last part of Section 2]{Mur17} and \cite[Section 5.3]{HMM19}, 
 it can be proved 
 \begin{align*}
  Z^{\mathrm{cyc}}_{\mathcal{A}}
   (R_x^{-1} ((y\shaub w) \otimes \underbrace{y\otimes\cdots\otimes y}_{m})) 
  =(m+1) Z^{\mathrm{cyc}}_{\mathcal{A}}
   (R_x^{-1} (w \otimes \underbrace{y\otimes\cdots\otimes y}_{m+1}))
 \end{align*} 
 for non-negative integer $m$ and $w\in y\mathfrak{H}x$, which is essentially equivalent to the derivation relation for FMZVs.  
\end{que}


\end{document}